\documentclass[a4paper,12pt]{article}
\usepackage{amssymb}
\usepackage{amsmath}
\usepackage{amsthm}
\usepackage{xcolor}
\usepackage{hyperref}
\hypersetup{
	colorlinks,
	linkcolor={red!60!black},
	citecolor={green!60!black},
	urlcolor={blue!60!black}
}

\usepackage{geometry}
\newtheorem{Theorem} {Theorem} [section]
\newtheorem{Proposition} [Theorem] {Proposition}
\newtheorem{Lemma} [Theorem] {Lemma}
\newtheorem{Remark} [Theorem] {Remark}
\newtheorem{Definition} [Theorem] {Definition}

\newcommand{\Ff}{{\mathbb F}}
\newcommand{\cC}{{\mathcal C}}
\newcommand{\cS}{{\mathcal S}}
\newcommand{\PG}{\mathrm{PG}}
\newcommand{\rk}{\mathrm{rk}}
\newcommand{\Tr}{\mathrm{Tr}}
\newcommand{\<}{\langle}
\renewcommand{\>}{\rangle} % was: tabbing command

\title{An improved algebraic construction for Ramsey numbers}
\author{\renewcommand\thefootnote{\alph{footnote}}
Ferdinand Ihringer\footnotemark[1] \and
\renewcommand\thefootnote{\alph{footnote}}
Sam Mattheus\footnotemark[2]}

\date{22 Aug 2026}

\begin{document}
\maketitle

{\renewcommand\thefootnote{\alph{footnote}}
\footnotetext[1]{Dept.~of Mathematics,
Southern University of Science and Technology, Shenzhen, Guangdong, China.
E-mail: {\tt ihringer@sustech.edu.cn}}

\footnotetext[2]{Dept.~of Mathematics and Data Science, 
	Vrije Universiteit Brussel, 
	Pleinlaan 2, 1050 Brussels, Belgium.
	E-mail: {\tt Sam.Mattheus@vub.be}}}

\begin{abstract}
  We provide an explicit algebraic construction showing that, uniformly for integers $3 \leq s \leq t$, as $t \to \infty$,
  \begin{align*}
   R( s,t ) \geq t^{(1-o(1)) \log s / \log(\log s  + 1) }.
  \end{align*}
  For large fixed $s$, this improves the dependence on $s$ in the general
  off-diagonal construction of Alon and Pudl\'ak.
  In particular, $R(33, t) \geq t^{2.1-o(1)}$, to our knowledge,
  the first explicit construction showing $R(s, t) \geq t^c$ for some fixed $s$
  and some $c > 2$.
  In the diagonal case,
  it improves the leading constant in the exponent of the classical
  Frankl--Wilson bound from $1/4$ to $1$, while being almost as simple
  to describe.
\end{abstract}

% MSC2020: 05D10, 05B25
% Keywords: Keywords: explicit Ramsey graphs, off-diagonal Ramsey numbers, point-hyperplane incidence, p-rank

\section{Introduction}

Ramsey theory is an area of mathematics underpinned by the philosophy that in any large
enough structure, there exists a relatively large uniform substructure. Originating in Ramsey's work in mathematical logic \cite{Ramsey29}, the subject has developed into a central area of combinatorics, with connections to number theory, geometry, topology, theoretical computer science, and ergodic theory. Classical manifestations include Schur's theorem on monochromatic solutions to $x+y=z$ \cite{Schur17}, van der Waerden's theorem on monochromatic arithmetic progressions \cite{vdW27}, and Rado's theorem on partition-regular systems of linear equations \cite{Rado33}. For surveys of recent developments in graph Ramsey theory, we refer to Morris \cite{Morris26} and Verstra\"ete \cite{Verstraete25}.

For positive integers $s,t$, the \textit{Ramsey number} $R(s, t)$
is the smallest integer $R$ such that every graph on $R$ vertices
contains either a clique of size $s$ or an independent set of size $t$.
Equivalently, $R(s,t)>N$ if and only if there exists a graph on $N$ vertices with clique number less than $s$ and independence number less than $t$.
In 1935, Erd\H{o}s and Szekeres \cite{ES35} famously established the bound
\begin{align}
  R(s, t) \leq \binom{s+t-2}{t-1}. \label{eq:trivup}
\end{align}

The strongest asymptotic lower bounds for Ramsey numbers have generally relied on the probabilistic method. In his pioneering 1947 paper \cite{Erdos47}, Erd\H{o}s used this method to find the lower bound $R(s,s)\geq 2^{(1/2-o(1))s}$ for \textit{diagonal} Ramsey numbers, where $s=t\to\infty$.
Although subsequent work improved lower-order factors, the leading exponential rate in this bound has remained unchanged for almost eighty years.
By contrast, recent breakthroughs have substantially improved the upper bound \eqref{eq:trivup}.
Campos, Griffiths, Morris, and Sahasrabudhe \cite{CGMS26} obtained the first exponential improvement over the Erd\H{o}s--Szekeres bound, and Gupta, Ndiaye, Norin, and Wei \cite{GNNW24} subsequently sharpened their estimate. 
Most recently, Balister et al.\ \cite{BBCGHMST26} gave a shorter proof of these results that also extends to the multicolour setting.

\medskip

No explicit construction is known that proves $R(s,s)\geq C^s$ for any absolute constant $C>1$.
Erd\H{o}s repeatedly highlighted this gap, offering a \$100 prize for such a construction, see \cite{ChungGraham98}.
The classical benchmark is the modular-intersection construction of
Frankl and Wilson.

\begin{Theorem}[Frankl--Wilson \cite{FW81}] \label{thm:FW}
	There exist direct explicit constructions of graphs showing that, as $s \to \infty$,
	\[
	R(s,s)\geq
	s^{(\frac14-o(1))\frac{\log s}{\log\log s}}.
	\]
\end{Theorem}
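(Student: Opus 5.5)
We take the classical Frankl--Wilson graph. Fix a prime $p$ and set $n = p^2$ (more generally $n$ of order $p^2$). The vertices are all $(p^2-1)$-subsets of $[n]$, and two distinct vertices $A,B$ are adjacent when $|A\cap B| \not\equiv -1 \pmod p$. This graph has $N=\binom{n}{p^2-1}$ vertices. The plan is to show that both its clique number and its independence number are at most $\binom{n}{p-1}$, and then to optimise $n$ in terms of $p$.

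\emph{Independent sets.} In an independent set every pairwise intersection satisfies $|A\cap B|\equiv -1 \pmod p$, while $|A| = p^2-1 \equiv -1$. We apply the modular Frankl--Wilson (Ray-Chaudhuri--Wilson type) theorem. To each set $A$ we attach the polynomial
\[
f_A(x)=\prod_{i=0}^{p-2}\bigl(\langle x, v_A\rangle - i\bigr)
\]
over $\Ff_p$, where $v_A$ is the characteristic vector of $A$. We then multilinearise on $\{0,1\}^n$. This gives $f_A(v_B)=0$ for $B\ne A$ and $f_A(v_A)\ne 0$, so the polynomials are linearly independent and lie in a space of dimension at most $\sum_{i\le p-1}\binom{n}{i}$. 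Hence the independence number is at most $\sum_{i<p}\binom{n}{i}$.

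\emph{Cliques.} In a clique every intersection size is not $-1$ mod $p$, and it lies in $\{0,\dots,p^2-2\}$. We use the non-modular Ray-Chaudhuri--Wilson bound: a family of $k$-sets whose pairwise intersections take at most $L$ distinct values has size at most $\binom{n}{L}$. Excluding the residue $-1$ mod $p$ among $0,\dots,p^2-2$ leaves $p^2-1-(p-1)=p^2-p$ values, which is far too many. So this naive choice fails, and the classical construction must use the reverse adjacency: $A\sim B$ iff $|A\cap B|\equiv -1 \pmod p$. With that choice, intersections in a clique lie in $\{p-1,2p-1,\dots,p^2-p-1\}$, which is only $p-1$ values, so the clique number is at most $\binom{n}{p-1}$. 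Independent sets then have intersections avoiding $-1$ mod $p$, and this is exactly the modular situation: with $L$ the set of $p-1$ nonzero-shifted residues, the polynomial argument above bounds their size by $\sum_{i\le p-1}\binom{n}{i}$. I will use this corrected adjacency. Getting the roles of the two arguments right, and checking the $k\equiv -1$, $k\notin L$ condition in the modular theorem, is the main subtle point.

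\emph{Optimisation.} With $n=p^3$ we get $N=\binom{p^3}{p^2-1}=p^{(1-o(1))p^2}$ and $s=\binom{p^3}{p-1}\le p^{3p}$. Then $\log s\approx 3p\log p$ and $\log\log s\approx\log p$, so $p\approx \log s/(3\log\log s)$. This gives
\[
\log N\approx p^2\log p\approx \frac{(\log s)^2}{9\log\log s},
\]
which is better than the $1/4$ claimed. To land on exactly the stated constant, it is enough to take $n=p^2$, as in the original paper. In that case $N=\binom{p^2}{p^2-1}$ degenerates, so one chooses the sets of size $p^2-1$ inside $[n]$ with $n=4p^2$ or similar and redoes the same computation. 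In any case the bound is at least $s^{(1/4-o(1))\log s/\log\log s}$, and since the statement only asks for this lower bound, any choice giving a constant of at least $1/4$ suffices. The final step is this routine asymptotic bookkeeping with $\binom{n}{k}$ estimates, together with interpolating between primes using Bertrand's postulate.
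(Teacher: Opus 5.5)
The paper gives no proof of this statement; it is quoted from Frankl and Wilson. Your construction, and after your self-correction your two linear-algebra bounds, are the classical ones. The initial ``failure'' was only a mismatch: with adjacency $|A\cap B|\not\equiv -1 \pmod p$, cliques fall under the modular theorem and independent sets under Ray-Chaudhuri--Wilson, and passing to the complement is harmless for $R(s,s)$.

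\textbf{The genuine gap is the final optimisation.} This step is exactly where the constant $\frac14$ comes from, and as written it fails.
From $s\le\binom{p^3}{p-1}\le p^{3p}$ you get $\log s\lesssim 3p\log p$. This gives only $\log N\gtrsim(\log s)^2/(9\log\log s)$, i.e.\ $N\ge s^{(1/9-o(1))\log s/\log\log s}$. Since $\frac19<\frac14$, this is weaker than the statement, not better.

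Your proposed remedies make things worse. For $n=p^c$ one has $\log N\sim(c-2)p^2\log p$ and $\log s\sim(c-1)p\log p$, so the exponent constant is $(c-2)/(c-1)^2$. This is maximised precisely at $c=3$ (value $\frac14$) and vanishes at $c=2$. Taking $n=4p^2$ gives $\log N=O(p^2)$ against $\log s\sim p\log p$, i.e.\ only $s^{O(\log s/(\log\log s)^2)}$.

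The fix is to keep $n=p^3$, which is the classical choice, and estimate the small binomial correctly:
$\sum_{i<p}\binom{p^3}{i}\le p\,\bigl(ep^3/(p-1)\bigr)^{p-1}=p^{(2+o(1))p}$.
Then $\log s\le(2+o(1))p\log p$ and $\log\log s=(1+o(1))\log p$, so $p\ge(\frac12-o(1))\log s/\log\log s$. Therefore $\log N\ge(p^2-1)\log p\ge(\frac14-o(1))(\log s)^2/\log\log s$.

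\textbf{A second, smaller gap is the interpolation to arbitrary $s$.} Because $\log N\sim p^2\log p$ while $\log s\sim 2p\log p$, losing a constant factor in $p$ costs a constant factor in the exponent.
Suppose all you know is that the next prime $p'$ satisfies $p'<2p$ (Bertrand). For $s$ close to the threshold $\binom{p'^3}{p'-1}$ you must still use $p$, and you only get $\log s\le(4+o(1))p\log p$, hence the constant $\frac1{16}$.
To obtain $\frac14-o(1)$ for every $s\to\infty$, you need consecutive primes with $p_{k+1}/p_k\to1$, which follows from the prime number theorem.
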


Throughout, $\log$ denotes the \textit{logarithmus dualis}, i.e., logarithm base $2$.
Alon \cite{Alon98} and Grolmusz \cite{Grolmusz00} later provided explicit direct constructions of the same logarithmic order of magnitude.

In this paper, a \textit{direct construction} refers to an algebraically or geometrically defined family whose vertex set and adjacency relation admit a short description.
This is in contrast to a different source of explicit constructions emerging in theoretical computer science from two-source dispersers and extractors. In this setting, a family of graphs on $N$ vertices is called \textit{strongly explicit} if, given the labels of two vertices $u$ and $v$, one can determine in time $\operatorname{polylog}(N)$ whether $u$ and $v$ are adjacent.
This line of research includes the breakthrough result by Barak et al.~\cite{BRSW12} and culminated in the recent result of Li \cite{Li23}, who showed that there exists $\varepsilon > 0$ and a family of \textit{strongly explicit} graphs showing $R(s,s)\geq 2^{s^\varepsilon}$ for sufficiently large $s$.
This is substantially stronger than the bounds arising from the classical algebraic constructions, although it still falls short of Erd\H{o}s' challenge of obtaining an exponential lower bound.
Our focus, however, is on direct algebraic and geometric constructions
in the preceding sense, rather than those obtained from the machinery of extractors.

\medskip

A second classical regime is that of \textit{off-diagonal} Ramsey numbers, where $s \geq 3$ is fixed and $t \to \infty$.
Here, the upper bound \eqref{eq:trivup} specializes to the asymptotic bound $R(s,t) = O(t^{s-1})$.
Recently, Brada\v{c} \cite{Bradac26} essentially resolved this regime by finding a construction based on finite geometry which is tight up to some polylogarithmic factors.
If we again limit ourselves to explicit constructions, then the picture is much different.
In 1994, Alon gave an explicit construction for $R(3, t) = \Omega(t^{1.5})$ \cite{Alon94}, which is still the best known.
Later, Kostochka, Pudl\'ak, and R\"odl gave explicit constructions for
$R(4, t) = \Omega(t^{1.6})$, $R(5, t) = \Omega(t^{1.\overline{6}})$, and
$R(6, t) = \Omega(t^2)$ \cite{KPR10}.
The strongest general explicit construction is due to Alon and Pudl\'ak.

\begin{Theorem}[Alon--Pudl\'ak \cite{AP01}] \label{thm:AP}
	For every fixed $s \geq 3$, there exists an explicit construction of graphs
	that shows, as $t \to \infty$,
	\[
	R(s, t) \geq t^{\Omega(\sqrt{\log s / \log \log s})}.
	\]
\end{Theorem}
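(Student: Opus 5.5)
The approach is to reproduce the Alon--Pudl\'ak argument: an explicit Cayley-type graph whose clique and independence numbers are controlled by an algebraic rank bound.

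\textbf{Construction.} Fix $s$ and choose a parameter $r \approx \sqrt{\log s/\log\log s}$. Take a finite field and consider the vectors of a suitable set $V \subseteq \Ff_q^{d}$, for instance the norm-type or Hermitian-type set used by Alon for $R(3,t)$, generalised to higher degree. Join $u$ and $v$ when some low-degree polynomial form $f(u,v)$ lies in a prescribed subset $A$ of the field. Concretely, I would take $V$ to be the vertices of an $r$-fold ``norm graph'', joining $u\sim v$ when $N(u+v)\in A$. The number of vertices is then $N=q^{d}$.

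\textbf{Clique bound.} A clique $K$ forces all pairwise sums $u+v$ to have norm in $A$. By choosing $A$ small, e.g.\ the elements of a small subfield or the roots of a fixed polynomial, a standard Bezout or linear-algebra argument shows $|K|<s$ once $s$ exceeds roughly $(r!)^{O(1)}$. Equivalently, $r$ is about $\log s/\log\log s$ up to square roots, which is where the $\sqrt{\cdot}$ in Theorem~\ref{thm:AP} enters.

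\textbf{Independence bound.} The graph is a sum graph over an abelian group, so its eigenvalues are character sums. Each eigenvalue is bounded via Weil's estimate by roughly $r^{O(1)} \sqrt{q^{d}}\,\cdot$ (a degree factor), while the degree is $D \approx N|A|/q$. The Hoffman ratio bound, or the analogue for sum graphs, then gives
\[
\alpha \le N\frac{|\lambda_{\min}|}{D-\lambda_{\min}} = N^{1-\Omega(1/r)}.
\]
Setting $t=\alpha$ gives $N \ge t^{\Omega(r)}$, and hence the stated $t^{\Omega(\sqrt{\log s/\log\log s})}$.

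\textbf{Main obstacle.} The hardest step is balancing the clique bound against the spectral gap. The clique argument wants the defining variety to have high degree, because that excludes small complete configurations. But high degree worsens the constant in the Weil bound and shrinks the spectral gap. Optimising the degree $r$ subject to both constraints is exactly what yields the square-root loss. I would first carry out the clique count as a Bezout count on the intersection of $s$ translates of the norm variety, and then choose $r^2\log r \approx \log s$.
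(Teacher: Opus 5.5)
\textbf{There is a genuine gap, on the independence side.} The paper does not prove this statement; it is quoted from Alon--Pudl\'ak. Within the paper it is superseded by Theorem~\ref{thm:main}, whose exponent $\lceil\log s\rceil/\log(\lceil\log s\rceil+1)$ is $\Omega(\sqrt{\log s/\log\log s})$ uniformly for $s\ge 3$.

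First, your final deduction is wrong. The bound $\alpha\le N^{1-\Omega(1/r)}$ only gives $N\ge t^{1/(1-\Omega(1/r))}=t^{1+O(1/r)}$, an exponent that tends to $1$. To get $N\ge t^{\Omega(r)}$ you need $\alpha\le N^{O(1/r)}$.

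Second, and more seriously, no argument of the form ``bound every nontrivial eigenvalue in absolute value by $\Lambda$ (Weil), then apply the ratio bound'' can deliver that. Your own numbers already show it: $\Lambda\gtrsim\sqrt{q^d}=\sqrt N$ and $D\le N$ make the certified bound $N\Lambda/(D+\Lambda)$ at least of order $\sqrt N$, so at best $N\approx t^2$. This is unavoidable for every $D$-regular $K_s$-free graph. If $|\lambda_i|\le\Lambda$ for all $i\ge 2$, then $\Lambda^2(N-1)\ge\sum_{i\ge2}\lambda_i^2=D(N-D)$. Tur\'an's theorem gives $N-D\ge N/(s-1)$, hence $\Lambda^2\ge D/(s-1)$ and
\[
\frac{N\Lambda}{D+\Lambda}\;\ge\;\min\Bigl\{\frac N2,\ \frac{N}{2\sqrt{(s-1)D}}\Bigr\}\;\ge\;\frac12\sqrt{\frac{N}{s-1}} .
\]
So your method can never certify more than $R(s,t)>N$ with $N\le 4(s-1)t^2$, i.e.\ exponent $2$. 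The target exponent, by contrast, is unbounded in $s$.

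The clique part is also only announced, not carried out. $V$, the form $f$ and the set $A$ are never fixed, and the B\'ezout count is not done. Your explanation of the square root is inconsistent: a clique bound of $(r!)^{O(1)}$ corresponds to $r\approx\log s/\log\log s$, with no square root. Your ``main obstacle'' paragraph therefore does not identify a real trade-off.

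The decisive missing idea, though, is an independence bound polynomially below $\sqrt N$. That needs a tool other than two-sided character-sum estimates, consistent with the paper's remark that controlling the independence number is the real obstacle for explicit constructions. The paper's route shows one that works:
\begin{itemize}
\item Proposition~\ref{prop:ind} bounds $\alpha$ by the $2$-rank of the point--hyperplane incidence matrix, giving $\alpha\le d^h+1$. Since $N\ge 2^{h(d-1)}$, $\alpha$ is roughly $N^{\log d/(d-1)}$.
\item The clique number is controlled separately by the Segre/B\'ezout count of Theorem~\ref{thm:clq}, giving $\omega\le 2^{d-2}$.
\end{itemize}
Choosing $d\approx\log s$ gives an exponent of about $\log s/\log\log s$, which in particular implies the Alon--Pudl\'ak bound.
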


We now state our main result, simultaneously improving the classical constructions of Frankl--Wilson and Alon--Pudl\'ak.

\begin{Theorem}\label{thm:main}
	Let $t \geq s \geq 3$ and $t \rightarrow \infty$.
	Then there exists a direct explicit construction of graphs
	that shows
	\begin{align*}
		R( s,t ) \geq t^{(1-o(1)) \lceil \log s \rceil / \log(\lceil \log s \rceil + 1) }.
	\end{align*}
\end{Theorem}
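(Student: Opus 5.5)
The plan is a finite-geometry construction in which the clique bound comes from linear algebra over $\Ff_2$ and the independence bound comes from a $2$-rank computation for point--hyperplane incidence. Let $n=\lceil \log s\rceil$ and $q=2^h$, with $h\to\infty$ playing the role of the growth parameter. The vertex set will be a large subset $V$ of the points of $\PG(n,q)$, of size roughly $q^n=2^{hn}$. Adjacency will be defined through a bilinear (or Hermitian/trace) form $B$: two points are adjacent when they are \emph{not} orthogonal. This makes an independent set a family of pairwise orthogonal points. We also restrict $V$ to points $x$ with $B(x,x)\neq 0$, for instance $\Tr(B(x,x))=1$.

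The independence bound is the rank argument, done first. If $S\subseteq V$ is independent, the $S\times S$ submatrix of the point--hyperplane incidence matrix (with point $y$ identified with hyperplane $y^\perp$) is, over $\Ff_2$, a matrix with zero off-diagonal entries and nonzero diagonal. Hence $|S|$ is at most the $2$-rank of the point--hyperplane incidence matrix of $\PG(n,2^h)$. By Hamada's formula this rank is $\binom{n+1}{n}^h+1=(n+1)^h+1$. So $t:=(n+1)^h+2$ works.

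Then
\[
\log N/\log t \approx hn/(h\log(n+1)) = n/\log(n+1),
\]
which is exactly the claimed exponent $\lceil\log s\rceil/\log(\lceil\log s\rceil+1)$. The $1-o(1)$ absorbs the lower-order terms as $h\to\infty$. For non-fixed $s$ (the diagonal-type regime $t\ge s$ with $s$ growing), one checks that the same computation remains uniform.

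The main obstacle is the clique bound, i.e.\ showing that any family of pairwise non-orthogonal points in $V$ has fewer than $s$, say at most $2^n$, members. Non-orthogonality over $\Ff_q$ alone does not bound cliques, so the adjacency must be refined. I would first try defining adjacency through the $\Ff_2$-valued form $\Tr\circ B$ together with the normalization $\Tr(B(x,x))=1$, and choose representatives so that in a clique the Gram matrix over $\Ff_2$ is the all-ones matrix. The key step would then be to show that vectors whose $\Ff_2$-Gram matrix is $J$ have differences that are pairwise orthogonal and isotropic, so they lie in a coset of a totally isotropic subspace. The remaining task, and the real work, is to bound the number of such points in projective (not vector) terms, together with the $\Ff_q$-structure, by $2^n$, i.e.\ below $s$. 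If this fails, the fallback is to choose $V$ as an $n$-dimensional $\Ff_2$-like "frame" family inside each line, so that cliques embed into $\Ff_2^n$. Throughout I would keep the vertex count at $2^{(1-o(1))hn}$, so the independence computation above is unaffected.
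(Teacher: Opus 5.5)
\paragraph{There is a genuine gap: the clique bound.} Your independence half is essentially the paper's. You use the same $2$-rank $(n+1)^h+1$ of the point--hyperplane incidence matrix of $\PG(n,2^h)$ and the same exponent count. But the clique bound, which is the whole difficulty, is missing, and neither adjacency rule you propose can give it.

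With adjacency ``not $B$-orthogonal'' on the non-isotropic points, cliques are polynomially large. Take $e$ non-isotropic and $W\subseteq e^{\perp}$ totally isotropic. The points $\langle e+w\rangle$, $w\in W$, are distinct, non-isotropic and pairwise non-orthogonal, because $B(e+w,e+w')=B(e,e)\neq 0$. So $\omega\ge q^{\dim W}$, for example $q^{\lfloor n/2\rfloor}$ for the dot product.

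The trace refinement is not just unfinished; it cannot work. Suppose adjacency means $\Tr B(x,y)=1$ for fixed representatives with $\Tr B(x,x)=1$. Then an independent set has Gram matrix $I$ for the symmetric $\Ff_2$-form $\Tr\circ B$ on $\Ff_2^{h(n+1)}$, so it is linearly independent and $\alpha\le h(n+1)$. A clique bound $\omega\le c(n)$ on $2^{(1-o(1))hn}$ vertices would then give $R(c(n)+1,h(n+1)+1)>2^{(1-o(1))hn}$. That contradicts $R(s,t)\le\binom{s+t-2}{t-1}$, which is polynomial in $h$ for fixed $n$. Your own observation that cliques lie in cosets of totally isotropic $\Ff_2$-subspaces, which can have $2^{\Theta(hn)}$ elements, points the same way. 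The ``frame'' fallback is not a construction.

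Two smaller slips:
\begin{enumerate}
\item ``At most $2^n$'' is not below $s$, since $n=\lceil\log s\rceil$ gives $2^n\ge s$. You need $\omega\le 2^{n-1}$.
\item For pairwise orthogonal non-isotropic points with $y\leftrightarrow y^\perp$, the incidence submatrix is $J-I$, not diagonal. This is harmless, since its $2$-rank is at least $|S|-1$.
\end{enumerate}

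\paragraph{The missing idea: an asymmetric, ordered adjacency.} The paper builds adjacency from a perfect matching of \emph{incident} point--hyperplane pairs. It fixes a total order $\prec$ and takes $a=\beta+\beta^q$ with $\beta$ normal. Then $\Tr(a)=0$, so each $\langle x\rangle$ lies on its matched hyperplane $H_{a/x}$. For $\langle x\rangle\prec\langle y\rangle$, adjacency means $\Tr(ax/y)=0$, that is, $\langle x\rangle\in H_{a/y}$.

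\emph{Independence.} A non-edge forces only one of the two cross incidences to vanish. So an independent set gives a triangular incidence submatrix with unit diagonal, and $\alpha\le d^h+1$ follows without any pairwise orthogonality.

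\emph{Cliques.} This one-sided condition is what makes cliques controllable. Take a clique $x_1\prec\dots\prec x_m$ and set $R_j=\langle x_1,\dots,x_j\rangle$; then $\langle a/x_j\rangle\in R_j^\perp$. Group the vertices by a common span $R$ of dimension $r$. They lift via $\theta(z)=(z,z^q,\dots,z^{q^{d-1}})$ to pairs in $\overline{R}\times\overline{R}^\perp$ whose Hadamard product is the fixed vector $\theta(a)$. By normality of $\beta$, $\theta(a)$ is a minimal zero-sum sequence. A valuation argument shows there are only finitely many such pairs. B\'ezout on the Segre variety of $\PG(r-1)\times\PG(d-r-1)$ then bounds their number by $\binom{d-2}{r-1}$. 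Summing over $r$ gives $\omega\le 2^{d-2}=2^{n-1}<s$. Nothing in your plan plays the role of this chain-of-spans and Segre counting argument.
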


Here $o(1)\to 0$ as $t \to \infty$, uniformly for $3 \leq s \leq t$.
This improves the leading constant in the exponent in Theorem \ref{thm:FW} and squares the exponent in Theorem \ref{thm:AP}.
Moreover, our construction provides the explicit lower bound $R(33, t) \geq t^{2.1-o(1)}$, the smallest case that improves on $R(s,t) \geq R(6, t) = \Omega(t^2)$ for $s \geq 6$.
To the best of our knowledge, this is the first explicit construction that shows
$R(s, t) \geq t^c$ for some fixed $s$ and some $c > 2$.
It would be interesting to find better explicit constructions for small $s$ that surpass the $t^2$ barrier, which appears to be a natural obstacle for this problem.

\medskip

The idea behind our result is that in the case of explicit constructions, controlling the independence number is the natural obstacle. 
We immediately deal with this issue by constructing a graph on point-hyperplane flags in projective space, where we know \textit{a priori} that a $p$-rank argument
gives a good bound on the independence number, see Proposition \ref{prop:ind}. 
Similar ideas also lie at the heart of the earlier constructions by Frankl--Wilson, Alon, and Grolmusz, but their vertex sets are set-systems, whereas ours have a more finite-geometric flavour.

In this way, our construction is perhaps closer in spirit to that of Brada\v{c}, although we use a different adjacency relation, and to recent work of Bamberg et al.\ \cite{BBIR26} who investigated a similar approach.
Choosing the right set of point-hyperplane flags as the vertex set can be seen as a deterministic analogue of the idea of random sampling in finite-geometric settings, ubiquitous in recent results in off-diagonal Ramsey theory, see \cite{Bradac26,MV24,Verstraete25}.

\paragraph*{AI Declaration}

While the authors had the idea to use $p$-rank arguments as the starting point, the first construction (see Remark \ref{remark:firstconstruction}) was provided by ChatGPT 5.6, along with a bound on its clique number based on algebraic geometry.
The authors subsequently developed, refined, and independently verified
all arguments and take full responsibility for the contents of the paper.

\section{The construction}

Let $d \geq 3$.
Write $q=2^h$ for some positive integer $h$ and 
write $\Tr$ for the trace from $\Ff_{q^d}$ to $\Ff_q$.
Put $V = \Ff_{q^d}$. We will also
identify $V$ with the $d$-dimensional vector space over $\Ff_q$.
In particular, the $1$-dimensional subspaces $\< x \>$, or $1$-spaces for short, of $V$ are the points of the
projective geometry $\PG(d-1, q)$. In the following, \textit{point}
will always refer to a $1$-space $\< x \>$ of $V$.
In general, we will treat vector spaces projectively throughout this work.

Choose $\beta \in V$ such that
\[
\beta, \beta^q, \ldots, \beta^{q^{d-1}}
\]
is a basis of $V$, that is, $\beta$ is a normal element over $\Ff_q$.
Put $a = \beta+\beta^q$.

\begin{Definition}
	The graph $TG^\prec_{d,h}$ has as vertex set the points of $\PG(d-1,q)$ with a given total ordering $\prec$,
	where $\<x\>$ and $\<y\>$ with $\<x\> \prec \< y \>$
	are adjacent if $\Tr(ax/y) = 0$.
\end{Definition}

In the next few sections we will show that $\omega(TG^\prec_{d,h}) \leq 2^{d-2}$ (Theorem \ref{thm:clq}) and $\alpha(TG^\prec_{d,h}) \leq d^h+1$ (Proposition \ref{prop:ind}). Given these results, our main theorem now easily follows.

\begin{proof}[Proof of Theorem \ref{thm:main}]
	Put $m:=\lceil\log s\rceil$ and $d:=m+1$ and, for all sufficiently large $t$, let $
	h:=\left\lfloor\frac{\log(t-2)}{\log d}\right\rfloor$.
	Then $\omega(TG^\prec_{d,h}) \leq 2^{d-2} =2^{m-1} <s$ and $\alpha(TG^\prec_{d,h}) \leq d^h+1 \leq t-1 <t$.
	Thus, $TG^\prec_{d,h}$ contains neither a clique of size $s$
	nor an independent set of size $t$.
	
	The number of vertices of $TG^\prec_{d,h}$ is
	\[
	\frac{2^{hd}-1}{2^h-1}
	\geq 2^{h(d-1)}
	=2^{hm}.
	\]
	Since $s \leq t$, we have $\log d = o(\log t)$ and hence
	\[
	h
	\geq \frac{\log(t-2)}{\log d}-1
	=(1-o(1))\frac{\log t}{\log d}
	\]
	as $t \to \infty$. It follows that
	\[
	R(s,t)
	\geq 2^{hm} \geq 
	t^{(1-o(1))\lceil\log s\rceil/
		\log(\lceil\log s\rceil+1)}. \qedhere
	\]
\end{proof}

\begin{Remark}\label{remark:firstconstruction}
	The first construction suggested by ChatGPT 5.6, which we will denote by $TG_{d,h}$, is the following.
	The vertex set consists of the points of $\PG(d-1,q)$, $\< x \>$ and $\< y\>$ adjacent if $\Tr(ax/y) = \Tr(ay/x) = 0$.
	Essentially the same proofs show $\omega(TG_{d,h}) \leq \binom{d-2}{\lceil (d-2)/2 \rceil}$ and $\alpha(TG_{d,h}) \leq (d^h+1)^2$, thus leading to a slightly worse leading factor in the exponent of the corresponding Ramsey bound.

	This graph is a Cayley graph over the cyclic group of order $\frac{q^d-1}{q-1}$ that corresponds to a Singer cycle in $\PG(d-1,q)$.
	Thus, eigenvalues can be estimated with standard methods. From small parameter computations and general heuristics, it appears that $TG_{d,h}$ is optimally pseudorandom when either $d$ is fixed and $h \rightarrow \infty$,
	or when $d \rightarrow \infty$ and $h$ is fixed.
\end{Remark}

\begin{Remark}
	Before proving our results, we give some remarks and a posteriori observations about our construction.
	\begin{enumerate}
		\item The same argument works in general characteristic (with $a = \beta-\beta^q$), but gives quantitatively worse lower bounds as the $p$-rank bound in Proposition \ref{prop:ind} becomes weaker.
		Moreover, our results are independent of the choice of order $\prec$.
		
		\item For a point $\< x \>$ of $V$, define
		\[
		H_x = \{ \< y \>: \Tr(xy) = 0 \}.
		\]
		Note that this parameterizes the hyperplanes of $V$. 
		One can then immediately observe, using the fact that $\Tr(a)=0$, that $(\<x\>, H_{a/x})$ is an incident point-hyperplane pair. From this point of view, the vertex set of $TG_{d,h}$ comprises the edges in a perfect matching $M$ of the point-hyperplane incidence graph $G$ in $\PG(d-1,q)$. 
		It then follows that $TG^\prec_{d,h}$ is the induced subgraph $H_G^\prec[M]$, borrowing the notation of Kostochka, Pudl\'ak and R\"odl \cite{KPR10}.
	\end{enumerate}
\end{Remark}

\section{Independence number}

Let $B$ be the point-hyperplane incidence matrix of $\PG(d-1, q)$
with rows and columns ordered according to $\prec$ so
that the diagonal entries correspond to $(\< x\>, H_{a/x})$.

\begin{Proposition}\label{prop:ind}
 We have $\alpha(TG^\prec_{d,h}) \leq d^h+1$.
\end{Proposition}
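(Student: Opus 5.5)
We prove this with the standard triangular-submatrix rank argument over $\Ff_2$, combined with the known $2$-rank of the point-hyperplane incidence matrix of $\PG(d-1,2^h)$.

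\textbf{Setup.} Let $I$ be an independent set of $TG^\prec_{d,h}$ and let $B_I$ be the principal submatrix of $B$ indexed by $I$. Row $\<x\>$ of $B$ corresponds to the point $\<x\>$. Column $\<y\>$ corresponds to the hyperplane $H_{a/y}$. The entry $B_{x,y}$ is $1$ exactly when $\Tr(ax/y)=0$.

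\textbf{Structure of $B_I$.}
\begin{itemize}
\item \emph{Diagonal.} Every diagonal entry is $1$, since $\Tr(a)=\Tr(\beta)+\Tr(\beta^q)=2\Tr(\beta)=0$ in characteristic $2$.
\item \emph{Above the diagonal.} Take $\<x\>\prec\<y\>$, both in $I$. They are non-adjacent, so $\Tr(ax/y)\neq 0$, and hence $B_{x,y}=0$.
\end{itemize}
Rows and columns are ordered by $\prec$, so $B_I$ is lower triangular with ones on the diagonal. Therefore $\rk_{\Ff_2}(B_I)=|I|$, and consequently $|I|\le \rk_{\Ff_2}(B)$.

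This step does not use the entries below the diagonal at all. That is why the bound does not depend on the choice of $\prec$.

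\textbf{Rank of $B$.} The $2$-rank of the point-hyperplane incidence matrix of $\PG(d-1,p^h)$ is classical (Hamada; Smith; Goethals--Delsarte; MacWilliams--Mann):
\[
\binom{p+d-2}{d-1}^{h}+1 .
\]
For $p=2$ this equals $\binom{d}{d-1}^h+1=d^h+1$. Combining with the previous step gives $\alpha\le d^h+1$, as claimed.

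\textbf{Main obstacle.} The only substantive ingredient is the $p$-rank formula. I would quote it rather than reprove it.

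If a self-contained argument were wanted, I would proceed as follows.
\begin{itemize}
\item Replace $B$ by $J-B$. This changes the rank by at most $1$.
\item Write $J-B$ as the matrix $\big(1+(x\cdot y)^{q-1}\big)$, with the entry $1$ over $\Ff_2$ meaning $x\cdot y\neq0$, up to the all-ones term.
\item Expand $(x\cdot y)^{q-1}=\prod_{i=0}^{h-1}(x\cdot y)^{2^i}$, and expand each factor as $\sum_j x_j^{2^i}y_j^{2^i}$.
\item Observe that the resulting matrix factors through the span of monomials $\prod_i x_{j_i}^{2^i}$. There are $d^h$ of these, which bounds the rank by $d^h$, plus $1$ for the $J$ term.
\end{itemize}
This monomial-counting upper bound is exactly what is needed here, since only an upper bound on $\rk_{\Ff_2}(B)$ is required.
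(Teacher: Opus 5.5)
Your proof is correct and is essentially the paper's argument: on an independent set, the principal submatrix of the (column-permuted) point-hyperplane incidence matrix $B$ is lower unitriangular over $\Ff_2$, because $\Tr(a)=0$ and non-adjacency forces the entries above the diagonal to vanish, so $|I|\le \rk_2(B)=d^h+1$ by the classical Smith/Hamada formula. Your optional sketch of the upper bound $\rk_2(B)\le d^h+1$, expanding $(x\cdot u)^{q-1}=\prod_{i=0}^{h-1}(x\cdot u)^{2^i}$ into $d^h$ rank-one terms, is also sound and is a nice addition since only the upper bound is needed (two small fixes: in characteristic $2$ it is $J-B$, not $B$, that equals $\big((x\cdot u)^{q-1}\big)$, and the rank bound obtained over $\Ff_q$ transfers to $\Ff_2$ because rank is invariant under field extension).
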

\begin{proof}
  Write $\rk_2(M)$ for the $2$-rank of a matrix $M$.
  It is well-known that $B$ has $\rk_2(B) = d^h+1$, see  \cite{Smith69}.
  If $\<x\>$ and $\<y\>$ are nonadjacent in $TG^\prec_{d,h}$,
  then the principal submatrix of $B$ induced by $\<x\>$ and $\<y\>$ is one of
  \[
   \begin{pmatrix}
    1 & 0 \\
    0 & 1
   \end{pmatrix},
   \qquad
   \begin{pmatrix}
    1 & 0 \\
    1 & 1
   \end{pmatrix}.
  \]
  Thus, if $C$ is the principal submatrix of $B$ induced by
  an independent set $Y$ of $TG^\prec_{d,h}$, then
  \[
    |Y| = \rk_2(C) \leq \rk_2(B) = d^h+1. \qedhere
  \]
\end{proof}

\section{Clique number}

For the remainder of this section, write $\overline{V}$ for the $d$-dimensional
vector space over the algebraic closure of $\Ff_{q^d}$.
For $R$ a subspace of $\overline{V}$, we denote by $R^\perp$ its orthogonal
complement with respect to the standard inner product $\cdot$.
Write $r = \dim(R)$ and suppose that $1 \leq r \leq d-1$.
Denote the Hadamard product by $\circ$.
Call a vector $\lambda = (\lambda_0, \ldots, \lambda_{d-1}) \in \overline{V}$
a \textit{minimal zero-sum sequence} if $\sum_i \lambda_i = 0$ and for all nonempty proper subsets $S$ of $\{ 0, \ldots, d-1 \}$,
we have
\[
 \sum_{i \in S} \lambda_i \neq 0.
\]

We refer to Chapter I of Hartshorne's book \cite{Hartshorne}
for all the required algebraic geometry.
The key points are that a nonconstant rational function on a
complete nonsingular curve has a zero and a pole (see \cite[Exercise I.6.4]{Hartshorne}), and that it admits a Laurent expansion in a local parameter (essentially, \cite[Theorem I.5.5A]{Hartshorne}).

\begin{Lemma}\label{lem:pairs}
 Let $\lambda \in \overline{V}$ be a minimal zero-sum sequence. Then the set
 \[
  Z_\lambda := \{ (\<x\>, \< y\>): \< x \circ y \> = \< \lambda \> \},
 \]
  where $\<x\>$ is a point of $R$
 and $\<y\>$ is a point of $R^\perp$, is finite.
\end{Lemma}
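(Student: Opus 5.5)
The plan is to argue by contradiction using the two algebro-geometric facts quoted just before the statement.

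\textbf{Setup.} First note that, as $d\ge 2$, every singleton is a nonempty proper subset of $\{0,\ldots,d-1\}$, so minimality gives $\lambda_i\neq 0$ for all $i$. Hence for $(\<x\>,\<y\>)\in Z_\lambda$ all coordinates $x_i,y_i$ are nonzero. Moreover $y$ is determined by $x$ through $\<y\>=\<(\lambda_i/x_i)_i\>$. Therefore $Z_\lambda$ is in bijection with
\[
X:=\Bigl\{\<x\>\in \PG(R) : x_i\neq 0 \text{ for all } i,\ (\lambda_i/x_i)_i\in R^\perp\Bigr\}.
\]
This is a quasi-projective variety. If it were infinite, its closure would contain an irreducible curve meeting $X$ in a dense open set. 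Let $C$ be the complete nonsingular model of that curve, so we get a nonconstant rational map $u\mapsto \<x(u)\>$ into $\PG(R)$ whose generic image lies in $X$.

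\textbf{Choosing a point and expanding.} Since the image point is not constant, some ratio $x_i/x_j$ is a nonconstant rational function on $C$. By \cite[Exercise I.6.4]{Hartshorne} it has a zero at some point $P$. Take a local parameter $u$ at $P$ and write the Laurent expansions $x_i(u)=u^{a_i}e_i(u)$, where each $e_i$ is a power series with $e_i(0)\neq 0$. At $P$ the exponents $a_i$ are not all equal, because $a_i-a_j>0$. Let $A_1<\dots<A_k$, with $k\ge 2$, be the distinct values taken by the $a_i$, and put $S_\ell=\{i:a_i=A_\ell\}$. Each $S_\ell$ is then a nonempty proper subset of $\{0,\ldots,d-1\}$.

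\textbf{Key identity.} For generic parameters $u,u'$ we have $x(u)\in R$ and $y(u')\propto(\lambda_i/x_i(u'))_i\in R^\perp$. Hence
\[
F(u,u'):=\sum_i \lambda_i\,\frac{x_i(u)}{x_i(u')}=0
\]
identically. Substitute $u'=tu$, with $t$ an indeterminate or any nonzero scalar, and work in Laurent series in $u$. This gives
\[
\frac{x_i(u)}{x_i(tu)}=t^{-a_i}\,\frac{e_i(u)}{e_i(tu)},
\]
and the constant term in $u$ of this expression is $t^{-a_i}$. Taking the $u^0$ coefficient of $F(u,tu)=0$ therefore yields
\[
\sum_{\ell=1}^k\Bigl(\sum_{i\in S_\ell}\lambda_i\Bigr)t^{-A_\ell}=0
\]
for infinitely many $t$. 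Since the $A_\ell$ are distinct, every coefficient must vanish: $\sum_{i\in S_\ell}\lambda_i=0$. Because $k\ge2$, each $S_\ell$ is a nonempty proper subset, which contradicts minimality of $\lambda$. Hence $Z_\lambda$ is finite.

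\textbf{Main obstacle.} The step I expect to need the most care is justifying the substitution $u'=tu$. Concretely, one must check that $F(u,u')=0$ holds as an identity of formal Laurent series in two variables, and not only pointwise on a dense set, so that specializing to $u'=tu$ and extracting coefficients is legitimate. This follows from the Laurent expansion of rational functions in a local parameter \cite[Theorem I.5.5A]{Hartshorne}, together with Zariski density. An alternative is to reduce to one variable by fixing $u'$ at generic points and comparing expansions. A secondary point is that the passage from ``infinite'' to ``contains a curve'' requires the set to be constructible, and this is clear from the defining equations.
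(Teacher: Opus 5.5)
Your argument is correct and is essentially the paper's. Write $x=\sum_j X_j u^j$ and $y=\sum_j Y_j u^j$. The $u^0$-coefficient of $x(u)\cdot y(tu)$ is then $\sum_j (X_j\cdot Y_{-j})\,t^{-j}$, so your identity amounts to $X_A\cdot Y_{-A}=\sum_{i:\,a_i=A}\lambda_i$ for every level $A$; the paper uses only the top level $M=\max_i a_i$, where this reads $X_M\cdot Y_{-M}=\sum_{i\in S}\lambda_i$.

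This also removes your ``main obstacle''. Each $X_j$ lies in $R$ and each $Y_j$ lies in $R^\perp$, because the linear equations cutting out $R$ and $R^\perp$ have constant coefficients. Hence $X_j\cdot Y_{-j}=0$ directly, and you need no two-variable identity, substitution $u'=tu$, or density argument.
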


\begin{proof}
Suppose for a contradiction that $Z_\lambda$ contains an irreducible curve $\cC$.
We will work over the function field of $\cC$ and rescale $x$ and $y$ so that
\[
 x_i y_i = \lambda_i
\]
for all $i$. As $\lambda$ is a minimal zero-sum sequence, all $\lambda_i$ are nonzero.

The point $\<x\>$ cannot be constant on $\cC$, as otherwise $\<y\>$ would be too.
Hence, some $x_i/x_j$ is not constant.
Thus, $x_i/x_j$ either has a zero or a pole at some place $P$.
Near $P$, choose a local parameter $u$ and write
\[
 x_i = u^{v_i} (c_i + O(u))
\]
for integers $v_i$ and constants $c_i$.
As $x_i/x_j$ has a zero or pole at $P$, the integers $v_i$ are not all equal.
(Otherwise, there is no zero or pole at $u=0$.)
After multiplying $x$ by an appropriate power of $u$ and $y$ by the inverse of that power of $u$,
we may assume that
\[
 \min_i v_i = 0 \qquad \text{ and } \qquad M := \max_i v_i > 0.
\]
 Put $S := \{ i: v_i = M \}$.
 Note that $S$ is a nonempty proper subset of $\{ 0, \ldots, d-1 \}$.
 Write
 \[
  x = \sum_j X_j u^j, \qquad y = \sum_j Y_j u^j
 \]
 with coefficients $X_j, Y_j \in \overline{V}$.
 Since $x \in R$, we have $X_j \in R$. Similarly, $Y_j \in R^\perp$.
 Hence, $X_M \cdot Y_{-M} = 0$.
 For $i \in S$, write $x_i = c_i u^M + \cdots$ with $c_i \neq 0$.
 Similarly, write $y_i = \lambda_i c_i^{-1} u^{-M} + \cdots$.
 For $i \notin S$, the coefficient of $u^{-M}$ in $y_i$ is zero. Consequently,
 \[
  \sum_{i \in S} \lambda_i = X_M \cdot Y_{-M} = 0.
 \]
 But $S$ is nonempty and proper, so this contradicts that $\lambda$ is a minimal zero-sum sequence.
\end{proof}

Now we can quantify what \textit{finite} in Lemma \ref{lem:pairs} means
by counting the common isolated zeros on a Segre variety.
It is essentially a variant of B\'ezout's theorem.

\begin{Proposition}\label{prop:fincurve}
  Suppose that $\lambda$ is a minimal zero-sum sequence.
  Then the number of pairs $(\<x\>, \<y\>)$ with $\<x\>$ is a point of $R$
 and $\<y\>$ is a point of $R^\perp$ satisfying $\<x \circ y\> = \<\lambda\>$
 is at most $\binom{d-2}{r-1}$.
\end{Proposition}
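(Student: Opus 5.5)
Since $\lambda$ is a minimal zero-sum sequence, every $\lambda_i$ is nonzero. So for a pair $(\<x\>,\<y\>)$ with $\<x\circ y\>=\<\lambda\>$, every coordinate of $x$ and of $y$ is nonzero. We can then rescale $y$ so that $x_iy_i=\lambda_i$ exactly, which gives $y_i=\lambda_i/x_i$. The condition $\<y\>\in R^\perp$ then says $\lambda\circ x^{-1}\in R^\perp$, where $x^{-1}$ is the coordinatewise inverse. This depends only on $\<x\>$, because scaling $x$ scales $x^{-1}$. Hence the pairs in question correspond bijectively to points $\<x\>$ of $\PG(R)\cong\PG(r-1,\overline{\Ff})$ with all coordinates nonzero and satisfying the $d-r$ linear conditions
\[
\sum_i w_i\,\frac{\lambda_i}{x_i}=0 \qquad\text{for all } w\in R .
\]
Here I use $(R^\perp)^\perp=R$, so membership in $R^\perp$ means orthogonality to a basis of $R$, which gives $r$ conditions. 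I would phrase it the other way: parametrize $y\in R^\perp$, a projective space of dimension $d-r-1$, and require $x=\lambda\circ y^{-1}\in R$, which is $r$ linear conditions in $1/y_i$. I will choose whichever side makes the count come out as $\binom{d-2}{r-1}$.

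The cleanest way to count is to view the solution set as the intersection, inside $\PG(d-1)$, of the linear space $\PG(R)$ with the image of $\PG(R^\perp)$ under the Cremona map $z\mapsto \lambda\circ z^{-1}$. Equivalently, it is the intersection of the linear space $\PG(R)$ (dimension $r-1$) with the reciprocal variety $\mathcal L^{-1}$ of the linear space $\mathcal L=\PG(\lambda\circ R^\perp)$ (dimension $d-r-1$). These have complementary dimensions, since $(r-1)+(d-r-1)=d-2$ in $\PG(d-1)$. Lemma~\ref{lem:pairs} shows the intersection is finite on the torus. It remains to rule out, or account for, intersection points at infinity, i.e. on coordinate hyperplanes. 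I would then invoke the known degree of the reciprocal of a generic linear space: for a $(k-1)$-dimensional linear space in $\PG(d-1)$, its reciprocal variety has degree $\binom{d-2}{k-1}$. That is the number of bases of the uniform matroid, or more precisely the Möbius invariant / beta invariant of the matroid, and it is at most $\binom{d-2}{k-1}$ in general, with equality for uniform matroids. With $k=d-r$ this gives $\binom{d-2}{d-r-1}=\binom{d-2}{r-1}$. By the refined Bézout inequality (Fulton), the number of isolated points in the intersection of a linear space with a variety of complementary dimension is at most the degree, and this works even without transversality and even with excess components. So finiteness plus the degree bound gives the claim.

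To keep the argument self-contained I would make the degree bound elementary. Clear denominators: on the torus, $\mathcal L^{-1}$ is cut out by the conditions that $\lambda\circ z^{-1}$ be orthogonal to $R$. Parametrize $z$ by $R$ itself, which has projective dimension $r-1$. This gives $d-r$ equations in $\PG(r-1)$, each of the form $\sum_i w_i\lambda_i\prod_{j\ne i}x_j=0$, of degree $d-1$. Naive Bézout would give $(d-1)^{d-r}$, which is too large. The sharper count instead uses a multihomogeneous / BKK-type bound: the equations are linear in the variables $1/x_i$, so we are intersecting a linear space with its "reciprocal" twin, and the BKK mixed volume of $\PG(R)$ against the reciprocal-linear condition gives $\binom{d-2}{r-1}$. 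An alternative is induction on $d$: degenerate one $\lambda_i$ and track how the points split, which reproduces Pascal's rule $\binom{d-2}{r-1}=\binom{d-3}{r-1}+\binom{d-3}{r-2}$.

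The main obstacle will be the step from finiteness to the explicit count. In that step I must control the behaviour at the coordinate hyperplanes, where $x_i=0$ or the points lie at infinity. That is exactly where Lemma~\ref{lem:pairs} works through the valuation argument, and the minimal zero-sum condition is what prevents solutions from escaping to the boundary. I would try first to cite the degree formula for reciprocal linear spaces (Proudfoot--Speyer) together with Fulton's refined Bézout. If I need to avoid that, I would fall back on the Pascal-rule induction.
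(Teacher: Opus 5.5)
Your reduction to points $\<x\>$ of $\PG(R)$ on the torus with $\lambda\circ x^{-1}\in R^\perp$ is fine. The gap is in the counting step.

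\textbf{The degree you quote is wrong.} For a generic $(k-1)$-dimensional linear space in $\PG(d-1)$, the reciprocal variety has degree $\binom{d-1}{k-1}$, not $\binom{d-2}{k-1}$. (This is the M\"obius invariant, i.e.\ the number of nbc bases, in the Proudfoot--Speyer result you cite.) Already the reciprocal of a generic line in $\PG(2)$ is a conic, and that of a generic line in $\PG(3)$ is a twisted cubic. The number $\binom{d-2}{k-1}$ is the beta invariant of $U_{k,d}$, which is not the degree of this variety. With the correct degree, refined B\'ezout applied from either side yields only $\binom{d-1}{r}$ or $\binom{d-1}{r-1}$. Both are strictly larger than $\binom{d-2}{r-1}$ when $1<r<d-1$: for $d=4$, $r=2$, a line against a twisted cubic gives $3$ instead of $2$. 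Summed over $r$, this also does not give the bound $2^{d-2}$ used in Theorem~\ref{thm:clq}.

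\textbf{The dimension count is off by one.} You have $(r-1)+(d-r-1)=d-2$, so $\PG(R)$ and the reciprocal variety are \emph{not} of complementary dimension in $\PG(d-1)$. They are generically disjoint and meet only because $\sum_i\lambda_i=0$, which is forced by $x\cdot y=0$. A sharp count must spend this relation to recover the missing dimension, and your argument never uses it. The BKK and ``degenerate one $\lambda_i$'' fallbacks are only asserted. Moreover, setting a $\lambda_i$ to zero destroys exactly the minimal zero-sum hypothesis that keeps solutions off the coordinate hyperplanes.

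\textbf{How the paper supplies the missing idea.} It works on the Segre variety $\cS\cong\PG(R)\times\PG(R^\perp)$, which has dimension $d-2$ and degree $\binom{d-2}{r-1}$. In the diagonal coordinates $z_i=x_iy_i$, the condition $\<x\circ y\>=\<\lambda\>$ becomes the hyperplane sections $F_i=\lambda_0z_i-\lambda_iz_0$ for $1\le i\le d-1$. The identity $\sum_iz_i=x\cdot y=0$ on $\cS$, combined with $\sum_i\lambda_i=0$, gives $\sum_iF_i=0$. This is precisely where the zero-sum relation buys back the dimension: one is left with $d-2$ hyperplane sections of a $(d-2)$-dimensional variety. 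Their common zero set is $Z_\lambda\sqcup B$, with $B=\{x\circ y=0\}$ closed and $Z_\lambda\subseteq\{z_0\ne0\}$. So by Lemma~\ref{lem:pairs} every point of $Z_\lambda$ is isolated, and B\'ezout gives $|Z_\lambda|\le\binom{d-2}{r-1}$.

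Your own parametrization does have a reading in which the beta invariant is the right number. Your points are the critical points of $\prod_ix_i^{\lambda_i}$ on $\PG(R)$ minus the coordinate hyperplanes, which is well defined because $\sum_i\lambda_i=0$. That route would go through Euler-characteristic/ML-degree results rather than a degree computation, and the special, non-generic exponent $\lambda$ would still need separate treatment.
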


\begin{proof}
Let $\cS$ be the image of $R \times R^\perp$ (seen projectively)
under the Segre embedding.
The diagonal Segre coordinates are $z_i := x_iy_i$.
Since $x \in R$ and $y \in R^\perp$,
\[
 \sum_{i=0}^{d-1} z_i = \sum_{i=0}^{d-1} x_i y_i = x \cdot y = 0.
\]
Recall that being a minimal zero-sum sequence implies that every $\lambda_i$ is nonzero.
Thus, for $i = 1, \ldots, d-1$, we can define the bilinear forms
\begin{align}
 F_i(x, y) := \lambda_0 x_i y_i - \lambda_i x_0 y_0 = \lambda_0 z_i - \lambda_i z_0. \label{eq:linF}
\end{align}
Then
\begin{align*}
 \sum_{i=1}^{d-1} F_i &= \lambda_0 \sum_{i=1}^{d-1} z_i - z_0 \sum_{i=1}^{d-1} \lambda_i
 = -\lambda_0 z_0 + \lambda_0 z_0 = 0.
\end{align*}
Hence, $F_1 = \cdots = F_{d-2} = 0$ implies $F_{d-1} = 0$.
Thus, the common zero set of $F_1, \ldots, F_{d-2}$ is
\[
 Z_\lambda \cup B,
\]
where $Z_\lambda$ is defined as in Lemma \ref{lem:pairs}
and $B = \{ (\< x \>, \< y \>): x \circ y = 0 \}$.
Here $Z_\lambda$ and $B$ are disjoint:
If $z_0 \neq 0$, then
\[
 z_i = \frac{z_0}{\lambda_0} \lambda_i
\]
for all $i$, so $\< x \circ y\> = \< \lambda \>$.
If $z_0 = 0$, then $z_i = 0$ for all $i$, so $x \circ y = 0$.
By Lemma \ref{lem:pairs}, $Z_\lambda$ is finite.

Before we conclude the proof, let us recall some standard
algebraic geometry.
If $X$ is an $m$-dimensional projective variety, then,
by the iterative application of a variant of B\'ezout's Theorem,
see \cite[Theorem I.7.7]{Hartshorne},
$m$ hyperplanes have at most $\deg X$ isolated points on $X$.

Each bilinear form $F_1, \ldots, F_{d-2}$
defines a hyperplane section of $\cS$, cf.\ Equation \eqref{eq:linF}.
Furthermore, $X = \cS$, $m = d-2$ (as $R$ has projective dimension $r-1$ and $R^\perp$ has projective dimension $d-r-1$), and $\deg X = \binom{d-2}{r-1}$ (see \cite[Exercise I.7.1(b)]{Hartshorne}).
As $Z_\lambda \subseteq \{ x_0y_0 \neq 0 \}$ and $B \subseteq \{ x_0y_0 = 0 \}$,
every point in $Z_\lambda$ is an isolated common zero of $F_1, \ldots, F_{d-2}$.
Hence,
\[
 |Z_\lambda| \leq \binom{d-2}{r-1}. \qedhere
\]
\end{proof}

For $i \in \{ 0, \ldots, d-1\}$,
define
\[
 \lambda_i = a^{q^i} = \beta^{q^i} + \beta^{q^{i+1}}.
\]
The special choice of $a$ as $a = \beta + \beta^q$ with $\beta$ normal
is only needed for the following.

\begin{Lemma}\label{lem:sums}
 The vector $\lambda = (\lambda_0, \ldots, \lambda_{d-1})$ is a minimal zero-sum sequence.
\end{Lemma}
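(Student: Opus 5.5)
We need to show $\sum_i \lambda_i = 0$ and that no nonempty proper subset $S\subsetneq\{0,\dots,d-1\}$ has $\sum_{i\in S}\lambda_i = 0$. The plan is to expand everything in the normal basis $\beta,\beta^q,\ldots,\beta^{q^{d-1}}$, where the question becomes a statement about a cyclic $0/1$-pattern over $\Ff_2$.

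Write indices modulo $d$, so that $\beta^{q^d}=\beta$ and $\lambda_i = \beta^{q^i}+\beta^{q^{i+1}}$. For the full sum, each $\beta^{q^j}$ appears exactly twice, once as the first term of $\lambda_j$ and once as the second term of $\lambda_{j-1}$. Since the characteristic is $2$, we get $\sum_{i=0}^{d-1}\lambda_i = 2\sum_j \beta^{q^j} = 0$. Equivalently, $\Tr(a)=\Tr(\beta)+\Tr(\beta^q)=2\Tr(\beta)=0$.

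For minimality, fix a subset $S$ and compute the sum in the basis:
\[
\sum_{i\in S}\lambda_i=\sum_{j=0}^{d-1}\bigl(\mathbf 1_S(j)+\mathbf 1_S(j-1)\bigr)\beta^{q^j},
\]
where the coefficients lie in $\Ff_2\subseteq\Ff_q$. Because the $\beta^{q^j}$ are linearly independent over $\Ff_q$, this sum vanishes if and only if $\mathbf 1_S(j)=\mathbf 1_S(j-1)$ for every $j$ modulo $d$. Going once around the cycle, this forces $\mathbf 1_S$ to be constant, so $S=\emptyset$ or $S=\{0,\dots,d-1\}$. Hence every nonempty proper $S$ has nonzero sum. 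In particular all $\lambda_i\neq 0$, which is the case $|S|=1$, and $\lambda$ is a minimal zero-sum sequence.

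The only delicate point is that $\lambda$ lives in $\overline V$, the vector space over the algebraic closure, while the linear independence we use is over $\Ff_q$. This causes no difficulty, because the coefficients are in $\Ff_2$ and the sums are evaluated inside the field $\Ff_{q^d}$ itself, where the normal-basis independence applies directly. I expect no real obstacle; the main care is the cyclic index bookkeeping $\beta^{q^d}=\beta$, which is exactly what makes the full sum vanish.
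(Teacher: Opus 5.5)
Your proof is correct and is the paper's argument: expand $\sum_{i\in S}\lambda_i$ in the normal basis to get coefficients $1_S(j)+1_S(j-1)$, which vanish for all $j$ (cyclically) only when $S$ is empty or the whole index set. Your separate check of the full sum and your remark that only $\Ff_q$-independence inside $\Ff_{q^d}$ is needed simply make explicit what the paper leaves implicit.
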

\begin{proof}
  For any subset $S$ of $\{ 0, \ldots, d-1 \}$,
  \[
   \sum_{i \in S} \lambda_i = \sum_{j=0}^{d-1} (1_S(j) + 1_S(j-1)) \beta^{q^j},
  \]
  where $1_S(-1) = 1_S(d-1)$. As the conjugates of $\beta$ form a basis,
  the sum vanishes precisely if $1_S(j) = 1_S(j-1)$ for all $j$,
  that is, if $S$ is either empty or the whole set.
\end{proof}

Now we can prove our bound on the clique number.

\begin{Theorem}\label{thm:clq}
 For every $d \geq 3$ and $h \geq 1$ we have $\omega(TG^\prec_{d,h}) \leq 2^{d-2}$.
\end{Theorem}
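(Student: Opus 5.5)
Encode each point $\<x\>$ of $\PG(d-1,q)$ by its vector of Frobenius conjugates, so that the trace condition becomes a bilinear condition over $\overline{\Ff}$. Then read a clique as a family of pairs $(u,w)$ with $u\in R$, $w\in R^\perp$ and $u\circ w=\lambda$, and apply Proposition~\ref{prop:fincurve} to each subspace $R$ in a flag.

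\textbf{Encoding.} For $x\in\Ff_{q^d}^*$ put
\[
\varphi(x)=(x,x^q,\ldots,x^{q^{d-1}})\in\overline{V}.
\]
If $c\varphi(x)=\varphi(y)$ with $c\in\overline{\Ff}$, the first coordinate gives $c=y/x\in\Ff_{q^d}$, and the second gives $c^q=c$. So $c\in\Ff_q$, and distinct points $\<x\>$ give distinct projective points $\<\varphi(x)\>$.

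Next set $u=\varphi(x)$ and $w=\lambda\circ\varphi(1/y)$, with $\lambda_i=a^{q^i}$ as before Lemma~\ref{lem:sums}. Then
\[
\Tr(ax/y)=\sum_i a^{q^i}x^{q^i}y^{-q^i}=u\cdot w .
\]
For the same point we have $\varphi(x)\circ\varphi(1/x)=(1,\ldots,1)$. Hence $u\circ w=\lambda$, and $u\cdot w=\Tr(a)=0$.

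\textbf{Reading a clique.} Let $\<x_1\>\prec\cdots\prec\<x_k\>$ be a clique, and put $u_j=\varphi(x_j)$ and $w_j=\lambda\circ\varphi(1/x_j)$. Adjacency gives $u_i\cdot w_j=0$ for $i<j$, and the computation above gives $u_j\cdot w_j=0$. So $w_j$ is orthogonal to $u_1,\ldots,u_j$.

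Let $R_j=\<u_1,\ldots,u_j\>\le\overline{V}$. Then:
\begin{itemize}
\item $u_j\in R_j$ and $w_j\in R_j^\perp$;
\item $u_j\circ w_j=\lambda$.
\end{itemize}
Since every $\lambda_i\neq0$, we have $w_j\neq0$. Thus $R_j^\perp\neq0$ and $1\le\dim R_j\le d-1$.

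\textbf{Counting.} The subspaces $R_1\subseteq R_2\subseteq\cdots$ form a chain, so there is at most one distinct subspace $R^{(r)}$ of each dimension $r\in\{1,\ldots,d-1\}$. Fix $r$ and take all $j$ with $R_j=R^{(r)}$. Each such $j$ gives a pair $(\<u_j\>,\<w_j\>)$ of points of $R^{(r)}\times (R^{(r)})^\perp$ with $\<u_j\circ w_j\>=\<\lambda\>$. Distinct $j$ give distinct pairs, because the $\<u_j\>$ are distinct by the encoding step.

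By Lemma~\ref{lem:sums}, $\lambda$ is a minimal zero-sum sequence. Proposition~\ref{prop:fincurve} therefore bounds the number of such $j$ by $\binom{d-2}{r-1}$. Summing over the dimensions gives
\[
k\le\sum_{r=1}^{d-1}\binom{d-2}{r-1}=2^{d-2}.
\]

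\textbf{Main obstacle.} The delicate point is that Proposition~\ref{prop:fincurve} applies to a single fixed $R$, while the natural subspace $R_j$ changes along the clique. This is handled by the chain structure of the $R_j$: every vertex whose $R_j$ equals a given $R$ still has $w_j\in R^\perp$, because $w_j$ is orthogonal to all $u_i$ with $i\le j$, and these span $R_j=R$. The order $\prec$ only determines which prefix spans are used, which is consistent with the bound not depending on $\prec$.
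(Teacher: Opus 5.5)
Your proof is correct and follows essentially the same route as the paper. You encode points by their Frobenius conjugates, observe that $\varphi(a/x_j)$ is orthogonal to the prefix span, apply Proposition~\ref{prop:fincurve} once per distinct subspace in the chain, and sum $\binom{d-2}{r-1}$ over $r$. The only (minor, favorable) difference is that you take $R_j$ to be the $\overline{\Ff}$-span of $u_1,\ldots,u_j$ directly and index by its $\overline{\Ff}$-dimension; this sidesteps the paper's implicit use of $\dim_{\overline{\Ff}}\overline{R}=\dim_{\Ff_q}R$, and you also spell out the injectivity of $\<x\>\mapsto\<\varphi(x)\>$ that the paper calls easily verified.
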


\begin{proof}
 Let $C = \{ \< x_1 \>, \ldots, \< x_m \> \}$ be a clique of $TG^\prec_{d,h}$,
 where $\<x_i\> \prec \<x_j\>$ if $i < j$.
 Let $R_j = \< x_i: i \in \{ 1, \ldots, j \} \>$ and $r_j := \dim_{\mathbb{F}_q}(R_j)$. 
 By the definition of $TG^\prec_{d,h}$, we obtain $\langle a/x_j \rangle \in R_j^\perp$ for all $j \in [m]$, where orthogonality is defined by $\Tr$.
 Since $\langle a/x_j \rangle$ is nonzero and belongs to $R_j^\perp$, we have
 $r_j\leq d-1$ for every $j\in[m]$.
 
 Let $R$ be a space appearing in the chain $R_1 \subseteq \dots \subseteq R_m$, $r := \dim_{\mathbb{F}_q}(R)$ its $\Ff_q$-dimension, and $J :=\{j\in[m]:r_j=r\}$ the collection of indices so that $R_j = R$ for all $j \in J$.
 
 Then all vertices $\langle x_j \rangle$ with $j \in J$ satisfy
 \[
  \langle x_j \rangle \in R \qquad \text{ and } \qquad \langle a/x_j \rangle \in R^\perp.
 \]
 Define a map $\theta:\Ff_{q^d} \to \Ff_{q^d}^d$ by
 \[
 \theta(z) = (z, z^q, \ldots, z^{q^{d-1}}) \qquad \text{ and } \qquad \overline{R} = \< \theta(v): \langle v \rangle \in R \>_{\overline{\Ff}_{q^d}} \subseteq \overline{V}.
 \]
 Note that $\Tr(zw) = \theta(z) \cdot \theta(w)$ is the standard dot product on $\Ff_{q^d}^d$.
 For all $\langle v \rangle \in R$ and $j \in J$ we have
 \[
 \theta(v) \cdot \theta(a/x_j) = \Tr(av/x_j) = 0.
 \]
 Since the vectors $\theta(v)$ span $\overline{R}$,
 we have $\theta(a/x_j) \in \overline{R}^\perp$.
 Moreover, $\theta(x_j) \circ \theta(a/x_j) = \theta(a) = \lambda$.
 Thus, every $\<x_j\> \in C$ with $j \in J$ supplies a pair $(\< \theta(x_j)\>, \<\theta(a/x_j)\>)$ counted by Proposition \ref{prop:fincurve}.
 It is easily verified that these pairs are distinct.
 Thus, there are at most $\binom{d-2}{r-1}$ such points $\langle x_j \rangle$.
 Thus,
 \[
  |C| \leq \sum_{r=1}^{d-1} \binom{d-2}{r-1} = 2^{d-2}. \qedhere
 \]
\end{proof}

\section{Conclusion}

While our construction improves on previous algebraic constructions, the gap to the upper bounds \eqref{eq:trivup} remains significant.
In the diagonal regime, it remains to be seen if these kinds of constructions are able to match the extractor-based lower bounds yielding $R(s,s) \geq 2^{s^\varepsilon}$, or provide an affirmative answer to the aforementioned question of Erd\H{o}s.

In the off-diagonal regime, Alon and Pudl\'ak asked \cite{AP01} whether there is an explicit
construction that shows $R(s, t) \geq t^{\Omega(s)}$ for fixed $s$.
Given the lack of progress in roughly a quarter century,
already the following would be very interesting: is there $\varepsilon > 0$ such that there exists an explicit construction yielding $R(s, t) \geq t^{\Omega(s^{\varepsilon})}$?

\paragraph*{Acknowledgements}
This research was supported by National Key R\&D Program of China under grant number 2025YFA1017700.
The second author was supported by postdoctoral fellowship 1267923N from the Research Foundation Flanders (FWO),
and is very grateful to SUSTech for their hospitality during the visit in which this work was carried out.
We would like to acknowledge helpful comments from Noga Alon and Qing Xiang.

\end{document}